\definecolor{shadecolor}{rgb}{1,0.8,0.3}
\newtheorem{theorem}{Theorem}[section] 
\newtheorem{proposition}{Proposition}[section]
\newtheorem {corollary}[theorem]{Corollary}
\newtheorem{lemma}{Lemma}[section]
\theoremstyle{definition}
\newtheorem{definition}{Definition}[section]
\newtheorem{example}{Example}[section]
\newtheorem{remark}{Remark}[section]
\DeclareMathOperator{\Supp}{Supp}
\DeclareMathOperator{\Conv}{Conv}
\DeclareMathOperator{\Newton}{Newton}
\newcommand{\N}{\mathbb{N}}
\newcommand{\Z}{\mathbb{Z}}
\newcommand{\R}{\mathbb{R}}
\title{Supersymmetric Schur polynomials have saturated Newton polytopes}
\author{Dang Tuan Hiep}
\email{hiepdt@dlu.edu.vn }
\address{Faculty of Mathematics and Computer Science, Da Lat University, Building A24, No. 1, Phu Dong Thien Vuong Road, Lam Vien - Da Lat, Lam Dong, Vietnam.}
\author{Khai-Hoan Nguyen-Dang}
\email{khaihoann@gmail.com}
\address{Morningside center of mathematics, Chinese academy of sciences, No.55, Zhongguancun East Road, Beijing, 100190, China}
\date{\today}
\subjclass[2000]{52B20, 05E05}
\keywords{Saturated Newton polytope, supersymmetric Schur polynomial, totally unimodularity}
\begin{document}

\begin{abstract}
    We prove that every supersymmetric Schur polynomial has a saturated Newton polytope (SNP). Our approach begins with a tableau-theoretic description of the support, which we encode as a polyhedron with a totally unimodular constraint matrix. The integrality of this polyhedron follows from the Hoffman–Kruskal criterion, thereby establishing the SNP property. 
\end{abstract}
\maketitle
\tableofcontents

\section{Introduction}
For a multivariate polynomial \(f=\sum_{\alpha\in\Bbb N^{d}}c_{\alpha}\,{\mathbf z}^{\alpha}\) its \emph{Newton polytope} is the convex hull \(
   \Newton(f):=\Conv\{\alpha\mid c_{\alpha}\neq0\}\subset\R^{d}.
\) We say that \(f\) has a \emph{saturated Newton polytope} (SNP for short) if every lattice point of \(\Newton(f)\) already occurs in the support, i.e., \(\Newton(f)\cap\Z^{d}=\Supp(f)\), where \(\Supp(f):=\{\alpha\mid c_{\alpha}\neq0\}.\) Equivalently, the Newton polytope can be seen as the \emph{integer hull} of its support. In recent years, the saturation property of Newton polytopes has emerged as a powerful tool, underpinning advances in commutative algebra (e.g.\ \cite{CCLMZ20,CCC23,HNP25}), algebraic geometry (e.g.\ \cite{Fei23,CLM24}), representation theory (e.g.\ \cite{FMD18,MS22,BJK24,BJK24b}), and manifold theory (e.g.\ \cite{CC24}).  Moreover, SNP plays a pivotal role in establishing a Lorentzian polynomial \cite{BH20}, thereby unlocking a host of applications across diverse areas (e.g.\ \cite{HMMS22,MMS24,ATZ24,KMD25}).

Rado’s permutahedron theorem (1952) (e.g. \cite{Ra52}) shows that every Schur polynomial \(s_{\lambda}\) has SNP because \(\Newton(s_{\lambda})\) equals the permutahedron \(P_{\lambda}\), the convex hull of the $S_d$-orbit of $\lambda$ in $\R^d$. The dominance order arguments force all interior weights to appear (see \cite{MTY19}). The authors in \cite{MTY19} also compiled a systematic list of families known (or conjectured) to have SNP (e.g., skew Schur, Stanley, Hall–Littlewood, certain resultants, discriminants, $\dots$) and crucially framed the question in a polyhedral language powerful enough to accommodate Schubert‐type bases. Since the work, a sequence of breakthroughs has confirmed SNP for most outstanding cases: \emph{symmetric Grothendieck polynomials} \cite{EY17}, \emph{key polynomials and Schubert polynomials} \cite{FMD18}, \emph{Grothendieck and dual characters} \cite{HMMS22}, \emph{double Schubert polynomials} \cite{CCMM23}, \emph{good class of symmetric polynomials} \cite{NNDD23}, \emph{chromatic symmetric functions}\cite{MMS24}, \emph{Demazure characters} \cite{BJK24}, \ldots.

Classical symmetric function theory is built on one alphabet of one set of variables and governs the characters of the general linear group~$\mathrm{GL}_{n}$. When the representation theory is upgraded from \emph{algebras} to \emph{super}–algebras, the natural ring is the polynomial algebra in
two disjoint sets of variables.
\[
   x=(x_{1},\dots ,x_{k}),\qquad
   y=(y_{1},\dots ,y_{\ell}),
\]
and the correct symmetry notion is \emph{supersym\-metry}: simultaneous invariance under permutations of the $x$’s \emph{and} of the $y$’s together with the \emph{cancellation property} $f(x_{k}=t,\;y_{\ell}=-t)=f$.

For example, supersymmetric Schur functions $S_{\lambda}(x,y)$ which can be seen as characters of $\mathfrak{gl}(k\,|\,\ell)$ introduced by Berele–Regev \cite{BR87}, sit at the crossroads of Lie super-algebras and algebraic combinatorics. Yet, \emph{nothing} was known about their Newton polytopes or support geometry. In this paper, we show that they also admit saturated Newton polytopes (see theorem \ref{Main}).  

\subsection{Beyond Rado's technique}
One can find an expansion of $S_\lambda(x,y)$ in terms of classical Schur polynomials, that is
\begin{equation}\label{SchurExpansion}
    S_{\lambda}(x,y) = \sum_{\mu \subseteq \lambda} s_{\mu}(x)s_{(\lambda/\mu)'}(y)
\end{equation}
where $\mu:= (\mu_1,\mu_2,\ldots) \subseteq \lambda:= (\lambda_1,\lambda_2,\ldots)$ if $\sum_1^n \mu_i \leq \sum_1^n \lambda_i$ for all $n \geq 1$. For any partition $\mu=(\mu_1\ge\dots\ge\mu_k\ge0)$ define its
\emph{permutahedron}
\[
   \Pi(\mu)=\Bigl\{\alpha= (\alpha_1,\alpha_2,\ldots,\alpha_k)\in\R_{\ge0}^{\,k}\;\Big|\;
      |\alpha|=|\mu|,
      \;\alpha_1+\dots+\alpha_i\le\mu_1+\dots+\mu_i\;(1\le i<k)\Bigr\},
\]
By Rado's theorem (see \cite{MTY19})
\begin{align}\label{eq:snp_schur}
    \Newton\bigl(s_\mu(x_1,\dots,x_k)\bigr)=\Pi(\mu),
   \qquad
   \Pi(\mu)\cap\Z^{k}=\Supp\bigl(s_\mu\bigr)
\end{align}
so every ordinary Schur polynomial has SNP. From that, it is not hard to prove that each summand in \ref{SchurExpansion} is SNP. We set
\begin{equation} \label{eq:p_mu}
   P_\mu := \Pi(\mu) \times \Pi\bigl((\lambda/\mu)'\bigr) \subseteq\R^{k+\ell}.
\end{equation}
Then we have
\begin{equation} \label{eq:supp_prod}
   \Supp\bigl(s_\mu(x)s_{(\lambda/\mu)'}(y)\bigr) = P_\mu\cap\Z^{k+\ell}. \qedhere
\end{equation}

By \eqref{SchurExpansion} and the non-negativity of all coefficients of the supersymmetric Schur polynomial, we have
\begin{equation} \label{eq:supp_s_lambda}
   \Supp(S_\lambda) = \bigcup_{\mu\subseteq\lambda} \Supp\bigl(s_\mu(x)s_{(\lambda/\mu)'}(y)\bigr) \stackrel{\eqref{eq:supp_prod}}{=} \bigcup_{\mu\subseteq\lambda}(P_\mu\cap\Z^{k+\ell}).
\end{equation}
Consequently, the Newton polytope is
\begin{equation} \label{eq:newton_s_lambda}
   \Newton(S_\lambda) = \Conv\!\left(\bigcup_{\mu\subseteq\lambda} P_\mu\right).
\end{equation}
Unlike the classical situation, the product-polytopes $P_\mu=\Pi(\mu)\times\Pi\!\bigl((\lambda/\mu)'\bigr)\subset\Bbb R^{k+\ell}$ are not ordered by inclusion when we vary $\mu\subseteq\lambda$. Hence a direct analogue of Rado’s theorem ($\mu\subseteq\lambda$  if and only if $P_\mu\subseteq P_\lambda$) can fail.

\subsection{Our method}
Our strategy is to recast the tableau-theoretic definition of a supersymmetric polynomial as a system of linear inequalities $B_\lambda$ whose integer solutions are exactly the exponent vectors of the monomials that appear. Using the mixed Berele–Regev insertion, we show that a content
\((\mathbf a,\mathbf b)\) appears in a \((k,\ell)\)-semistandard tableau of shape \(\lambda\) if and only if it satisfies the hook inequalities together with the size constraint. This yields the exact, purely linear description:
\[
   \Supp(S_\lambda)=B_\lambda,
\]
as proved in Theorem~\ref{thm:support-hook}. In this way, the combinatorics of tableaux is \emph{linearized} into a system of initial-segment inequalities.

We encode those inequalities as a polyhedron:
\[
   H=\{u\in\mathbb R^{k+\ell}\mid \widetilde A u\le \widetilde b\},
\qquad
H\cap\mathbb Z^{k+\ell}=B_\lambda,
\]
as described in Section~\ref{subsec:poly-model}. After a row sign normalization (Proposition~\ref{prop:rowsign}) the constraint matrix becomes an interval (consecutive–ones) matrix, hence totally unimodular (TU)
(Theorem~\ref{thm:TU-tildeA}). By the Hoffman–Kruskal criterion (Theorem~\ref{thm:HK}) the polyhedron \(H\) is integral (Corollary~\ref{thm:integral-H}), thus completing the proof of the SNP property (Theorem~\ref{Main}). As a consequence, interpreting $S_\lambda$ in the Berele–Regev framework, the exponent vectors $(a,b)$ are precisely the weights that occur (with multiplicity given by tableau counts); hence the weight set equals the integer points of the hook polytope $H$, with no gaps.

The first work using total unimodularity method in the study of SNP appeared in \cite{ARY21}. Besides that, a related framework discussed in \cite{PZ23}, this work provides an a framework that simultaneously addresses two intertwined alphabets and prove the SNP of supersymmetric Schur polynomial. Our approach integrates the combinatorial structure and representation‑theoretic properties of supersymmetric Schur polynomials through techniques from integer programming. We believe that this same framework can be extended given further analysis of the underlying combinatorial models to other supersymmetric polynomials, such as the super–Stanley symmetric functions of Fomin–Kirillov \cite{FK96} and the supersymmetric Macdonald polynomials \cite{BFDLM12a}.

%In our forthcoming work, we will introduce a supersymmetric analogue of the Grothendieck polynomial and adapt the methods developed here to study its support.

%Moreover, the Newton polytope of $S_\lambda$ exhibits a richly structured geometry, most notably the \emph{integer decomposition property} (see \cite{Sch03}). In our forthcoming work, we will investigate these geometric and combinatorial aspects through the lens of matroid theory and demonstrate their applications in algebraic geometry and representation theory.

\subsection*{Organization}\label{subsec:organization}

Our work contains two parts. In Section~\ref{sec:hook}, we collect preliminaries and prove the \emph{hook–inequality description} of the support of \(S_{\lambda}(x,y)\) via the mixed Robinson--Schensted correspondence. Section~\ref{sec:TU} encodes these inequalities as a polyhedron \(H\subset\R^{k+\ell}\) and shows that its defining matrix is TU, thus completing the proof.

\subsection*{Acknowledge} 
The authors gratefully acknowledge Professor Phung Ho Hai for his insightful suggestions and valuable encouragement during the course of this research. We would like to thank Professor Alexander Yong for drawing our attention to his paper \cite{ARY21} and for providing valuable insights regarding the TU method. The second author also gratefully acknowledges the generous support and excellent facilities provided by the Morningside Center of Mathematics, Chinese Academy of Sciences. This project was initiated at the “Arithmetic–Algebraic Geometry” conference held in Nha Trang, Khanh Hoa, and we thank the organizers and the University of Khanh Hoa for their kind invitation and warm hospitality.

\section{Supersymmetric Schur polynomials}\label{sec:hook}

%\subsection{Supersymmetric Schur polynomials}
We recall Berele--Regev's setting in \cite[Section 2]{BR87}. Fix integers $k,\ell\ge 1$ and declare the ordered “super–alphabet”
\[
  t_1 < t_2 < \dots < t_k < u_1 < u_2 < \dots < u_\ell.
\]
A partition $\lambda$ lies in the \emph{hook}
\[
  H(k,\ell)\;:=\;\bigl\{\lambda \mid \lambda_{k+1}\le \ell \bigr\}.
\]
The main ingredient in Berele--Regev's theory is the notion $(k,\ell)$–semistandard tableau given as follows.
\begin{definition}
    Let $D_\lambda$ be the Young diagram of $\lambda$. A \emph{$(k,\ell)$–semistandard tableau} of shape $\lambda$, abbreviated by $\text{SSYT}_{k,l}(\lambda)$, is a filling of $D_\lambda$ with the letters $t_i,u_j$ such that
\begin{itemize}
  \item $t$–letters $t_1,\dots ,t_k$ are weakly increasing \emph{along rows} and strictly increasing \emph{down columns};
  \item $u$–letters $u_1,\dots ,u_\ell$ are weakly increasing \emph{down columns} and strictly increasing \emph{along rows}.
\end{itemize}
\end{definition}

For a semistandard tableau $T$ set
\[
  a_i(T):=\#\bigl\{t_i\text{ in }T\bigr\},\qquad
  b_j(T):=\#\bigl\{u_j\text{ in }T\bigr\},
\]
and write the \emph{content}
$(\mathbf a(T),\mathbf b(T))\in\Bbb N^{k+\ell}$. Clearly $\sum_i a_i + \sum_j b_j = |\lambda|$.

\begin{example}
    A $(3,2)$-semistandard tableau of shape $(3,3,1)$
    \begin{figure}[H]
 \centering
  \includegraphics[width=0.3\textwidth]{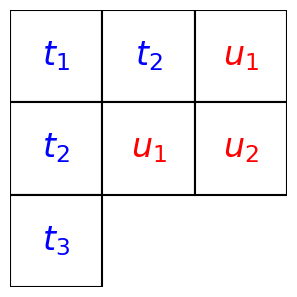}.
 \end{figure}

The content of the tableau
\[
  (\mathbf{a}(T_1), \mathbf{b}(T_1)) = (1, 2, 1, 2, 1).
\]
\end{example}
\begin{example}
A $(2,2)$-semistandard tableau of shape $(4,3,1)$
\begin{figure}[H]
 \centering
  \includegraphics[width=0.3\textwidth]{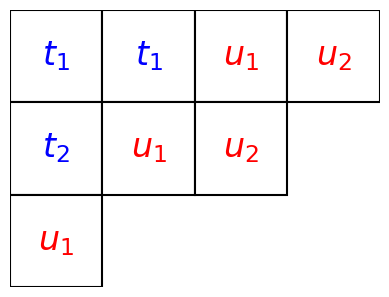}.
 \end{figure}

The content of the tableau
\[
  (\mathbf{a}(T_2), \mathbf{b}(T_2)) = (2, 1, 3, 2).
\]
\end{example}

\textbf{Observation}: For every $1\le r\le k$, because the $t$–letters increase \emph{strictly} \textit{down} every column, all occurrences of $t_1,\dots ,t_r$ are therefore confined to the first $r$ rows, which contain exactly $\lambda_1+\dots +\lambda_r$ boxes; this forces $\sum_{i=1}^r a_i(T)\le \sum_{i=1}^{r}\lambda_i$. The column statement is the left–right mirror image: every $u_1,\dots ,u_s$ lies inside the first $s$ columns. Building on these observations, we introduce the following definition.

\begin{definition}
Let $\lambda\in H(k,\ell)$ be a $(k,\ell)$–hook partition with rows $\lambda=(\lambda_1\ge\lambda_2\ge\cdots)$ and columns $\lambda'=(\lambda'_1\ge\lambda'_2\ge\cdots)$.
For vectors $\mathbf a=(a_1,\dots ,a_k)\in\N^{k}$
and $\mathbf b=(b_1,\dots ,b_\ell)\in\N^{\ell}$ define
\[
  A_{\le r}:=\sum_{i=1}^{r}a_i \quad(1\le r\le k) \quad ,
  \quad
  B_{\le s}:=\sum_{j=1}^{s}b_j \quad(1\le s\le\ell).
\]
We say $(\mathbf a,\mathbf b)$ \emph{satisfies the hook inequalities for $\lambda$} if
\[
  A_{\le r}\le\sum_{i=1}^{r}\lambda_i \quad(1\le r\le k)\quad ,\quad
  B_{\le s}\le\sum_{j=1}^{s}\lambda'_j \quad(1\le s\le\ell)\quad ,\quad
  |\mathbf a|+|\mathbf b|=|\lambda|.
\]
\end{definition}

These discussions can be summarized as follows. 

\begin{lemma}
Let $\lambda\in H(k,\ell)$ and $(\mathbf a,\mathbf b)\in\Bbb N^{k+\ell}$. A $(k,\ell)$–semistandard tableau $T$ of shape $\lambda$ with content $(\mathbf a,\mathbf b)$ satisfies the hook inequalities.
\end{lemma}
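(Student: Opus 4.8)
The plan is to turn the Observation preceding the statement into a proof. Three things must be verified: the two chains of initial-segment inequalities, $A_{\le r}\le\sum_{i=1}^r\lambda_i$ for $1\le r\le k$ and $B_{\le s}\le\sum_{j=1}^s\lambda'_j$ for $1\le s\le\ell$, and the size identity $|\mathbf a|+|\mathbf b|=|\lambda|$. The last is immediate: every one of the $|\lambda|$ boxes of $D_\lambda$ carries exactly one letter, and $a_i(T)$, $b_j(T)$ merely tally the boxes carrying each fixed letter, so $\sum_i a_i(T)+\sum_j b_j(T)=|\lambda|$.

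For the $t$-inequalities I would slice $T$ \emph{column by column}. Fix $r$ with $1\le r\le k$ and a column index $j$. Reading from top to bottom, the boxes of column $j$ whose label lies in $\{t_1,\dots,t_r\}$ receive strictly increasing labels, because $t$-letters are strictly increasing down columns; since there are only $r$ admissible labels and only $\lambda'_j$ boxes in the column, column $j$ contains at most $\min(r,\lambda'_j)$ of them. Expressing $A_{\le r}=\sum_{i=1}^r a_i(T)$ as the sum over columns of the number of boxes with label in $\{t_1,\dots,t_r\}$ and summing the previous bound,
\[
  A_{\le r}\ \le\ \sum_{j\ge 1}\min\bigl(r,\lambda'_j\bigr)\ =\ \sum_{i=1}^r\lambda_i ,
\]
where the last equality is the count of the boxes in the first $r$ rows of $D_\lambda$ done two ways (by columns on the left, by rows on the right).

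The $u$-inequalities follow from the transposed bookkeeping. Since $u$-letters are strictly increasing \emph{along rows}, each row $i$ contains at most $\min(s,\lambda_i)$ boxes with label in $\{u_1,\dots,u_s\}$, and summing over rows gives $B_{\le s}\le\sum_{i\ge1}\min(s,\lambda_i)=\sum_{j=1}^s\lambda'_j$, the identity now counting the boxes in the first $s$ columns of $D_\lambda$. Together with the size identity this is precisely the assertion that $(\mathbf a(T),\mathbf b(T))$ satisfies the hook inequalities for $\lambda$.

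I do not anticipate a genuine obstacle; the one point that needs care is that the two defining monotonicity conditions are not interchanged by transposition, so the $t$-estimate must be read off columnwise and the $u$-estimate rowwise. A uniform per-row (or per-column) count for both alphabets, or an attempt to pin each $t_i$ (resp.\ $u_j$) to a prescribed block of rows or columns, is more delicate than necessary — it is enough to track the aggregate column counts of small $t$-labels and the aggregate row counts of small $u$-labels. It is also worth noting that this direction of the argument uses nothing beyond column-strictness of the $t$-letters, row-strictness of the $u$-letters, and the fact that each box holds exactly one letter; the weak monotonicities and the relative order $t_k<u_1$ are not needed here.
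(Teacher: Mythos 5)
Your proof is correct and is in substance the paper's own argument: the paper proves this lemma by the Observation preceding the definition of the hook inequalities, which confines all occurrences of $t_1,\dots,t_r$ to the first $r$ rows (and, mirror-wise, $u_1,\dots,u_s$ to the first $s$ columns) and then counts the boxes there, with the size identity being immediate. Your bookkeeping differs only slightly: instead of locating the small $t$-letters in the first $r$ rows, you bound their number in each column by $\min(r,\lambda'_j)$ and sum, the two counts agreeing via $\sum_j \min(r,\lambda'_j)=\sum_{i\le r}\lambda_i$ (and transposed for the $u$-letters). This variant buys a small amount of robustness: the confinement statement in the paper tacitly uses that a $u$-letter never sits above a $t$-letter in a column, which is part of the full Berele--Regev semistandardness but not literally one of the two bullet conditions as the definition is stated, whereas your per-column (resp.\ per-row) bound needs only column-distinctness of the $t$-labels and row-distinctness of the $u$-labels, as you note yourself. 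Either way the conclusion and the level of the argument are the same.
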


We show that the converse of the above lemma is also true, via Berele–Regev’s \emph{$(k,\ell)$-mixed Robinson–Schensted correspondence} (\cite[2.5--2.7]{BR87}).

\begin{theorem}\label{thm:BR-RS}
Let $\mathcal{A}_{k,\ell} = \{t_1 < \dots < t_k < u_1 < \dots < u_\ell\}$ be a super-alphabet. The mixed Robinson--Schensted (RS) correspondence is a bijection that maps a word $w$ built from $\mathcal{A}_{k,\ell}$ to a pair of tableaux $(P(w), Q(w))$ of the same shape $\lambda \in H(k,\ell)$.
\begin{enumerate}
    \item The tableau $P(w)$, called the \emph{insertion tableau}, is a $(k,\ell)$-semistandard tableau whose content is determined by the letter multiplicities in $w$.
    \item The tableau $Q(w)$, called the \emph{recording tableau}, is a standard Young tableau of the same shape $\lambda$, whose entries record the positions at which new cells were added to the shape during the insertion process.
\end{enumerate}
\end{theorem}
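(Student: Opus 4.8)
The plan is to exhibit the bijection explicitly through a type-sensitive Schensted insertion and then verify its properties. Given a word $w = w_1 w_2 \cdots w_n$ over $\mathcal{A}_{k,\ell}$, I would build $P(w)$ by successively row-inserting $w_1,\dots,w_n$, where the bumping rule depends on the kind of letter being inserted: a $t$-letter $t_i$ row-inserted into a row bumps the leftmost entry that is \emph{strictly} greater than $t_i$ (the ordinary RSK rule, which keeps the $t$'s weakly increasing along rows and strictly increasing down columns), whereas a $u$-letter $u_j$ bumps the leftmost entry that is \emph{weakly} greater than $u_j$ (the dual-RSK rule, which keeps the $u$'s strictly increasing along rows and weakly increasing down columns). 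If no entry is bumped the letter is appended at the end of the row; otherwise the bumped entry is row-inserted into the next row by the same recipe, its own rule being determined by whether it is a $t$- or a $u$-letter. The recording tableau $Q(w)$ is defined as usual, by writing $m$ in the unique cell added to the shape during the insertion of $w_m$.

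Next I would prove a \emph{mixed bumping lemma}: if $R$ is a valid row of a $(k,\ell)$-semistandard tableau and $x$ is inserted, then the new row is again valid, the bumped entry $y$ (if any) occupies a column weakly to the left of the cell into which $x$ settles, and $y$ stands in the correct order relation with the entry directly below it, so that the recursion descends legitimately into the next row. The delicate point --- and what I expect to be the main obstacle --- is controlling how $t$- and $u$-letters interact along a bumping path. One must establish a monotonicity of bump types: since every $u$-letter exceeds every $t$-letter, a $u$-letter never bumps a $t$-entry, so once the value carried down a path is a $u$-letter it stays a $u$-letter and remains inside the ``$u$-region'' of the tableau; a $t$-letter may bump a $u$-entry, but then the displaced value is a $u$-letter and the previous sentence applies. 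Hence along any bumping path the types form a block of $t$'s followed by a block of $u$'s, never the reverse --- which is precisely what guarantees that the ``$t$-part'' and ``$u$-part'' of $P$ always fit together along an inner hook border and that insertion preserves this border. Granting the lemma, an induction on $m$ shows that $P(w)$ is $(k,\ell)$-semistandard, that the shape gains exactly one box at each step, and hence that $Q(w)$ is a standard Young tableau of that shape; since the shape of a $(k,\ell)$-semistandard tableau lies in $H(k,\ell)$, so does the common shape $\lambda$.

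For the inverse map I would run the insertion backwards. Given a pair $(P,Q)$ in which $P$ is $(k,\ell)$-semistandard and $Q$ is standard of the same shape $\lambda \in H(k,\ell)$, locate the cell of $Q$ carrying the largest label, delete the entry of $P$ in that cell, and reverse-bump it upward through the rows of $P$: at each stage the type ($t$ or $u$) of the value being carried up selects the reverse-bumping rule, namely the inverse of the corresponding forward rule; the value finally ejected from the first row is taken as $w_n$, and iterating recovers $w_{n-1},\dots,w_1$. That the forward and backward procedures are mutually inverse is, as in the classical case, the local check that one reverse-bumping step exactly undoes one forward-bumping step, which again rests on the bumping lemma and its type-monotonicity. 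Finally, the content statement is immediate: insertion neither creates nor destroys letters, so the multiset of entries of $P(w)$ equals the multiset of letters of $w$, whence $a_i(P(w)) = \#\{m : w_m = t_i\}$ and $b_j(P(w)) = \#\{m : w_m = u_j\}$. All of these verifications are carried out in detail in Berele--Regev \cite[2.5--2.7]{BR87}; here we have only outlined the architecture of the argument.
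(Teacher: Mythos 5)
The paper does not actually prove Theorem~\ref{thm:BR-RS}: it is stated as background and justified entirely by the citation to Berele--Regev \cite[2.5--2.7]{BR87}, so there is no internal proof to measure your argument against. Your sketch reconstructs the mechanism behind the cited result --- a type-sensitive insertion (ordinary strict bumping for $t$-letters, dual weak bumping for $u$-letters), a mixed bumping lemma with type-monotonicity along bumping paths, and reverse bumping for the inverse --- and this outline is sound: the observation that a $u$-letter can never bump a $t$-entry (since every $u$ exceeds every $t$) is exactly the point that keeps the $t$- and $u$-regions coherent, and content preservation and the standardness of $Q$ follow as you say. Two caveats are worth flagging. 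First, your formulation differs from the one the paper itself invokes later (in the proof of Proposition~\ref{prop:hook-content} the mixed RS rules are described as ``row insertion for $t$'s, column insertion for $u$'s''), so if you intend your version to stand in for Berele--Regev's you should either verify that the two insertion schemes produce the same correspondence or prove the bijection properties for your rule directly. Second, the load-bearing step --- the mixed bumping lemma and the local check that reverse bumping undoes forward bumping in the two-alphabet setting --- is asserted rather than proved; since you, like the paper, ultimately defer these verifications to \cite{BR87}, this is acceptable as a sketch, but as a self-contained proof it would still be incomplete precisely at that point.
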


We derive the correspondence between $(k,l)$-semistandard tableaux and pairs satisfying hook inequalities.
\begin{proposition}\label{prop:hook-content}
    Let $\lambda\in H(k,\ell)$ and
$(\mathbf a,\mathbf b)\in\Bbb N^{k+\ell}$.
There exists a $(k,\ell)$–semistandard tableau $T$ of shape $\lambda$
with content $(\mathbf a,\mathbf b)$ if and only if
$(\mathbf a,\mathbf b)$ satisfies the hook inequalities.
\end{proposition}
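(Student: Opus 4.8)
The forward implication ($\Rightarrow$) is exactly the lemma stated just above (the $t$‑letters $t_1,\dots,t_r$ live in the first $r$ rows, the $u$‑letters $u_1,\dots,u_s$ in the first $s$ columns), so only the converse needs an argument. The plan is to manufacture, from a content obeying the hook inequalities, an explicit $(k,\ell)$‑semistandard tableau of shape $\lambda$, using the mixed Berele--Regev insertion of Theorem~\ref{thm:BR-RS} to guarantee semistandardness of what we build.

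\emph{Reformulation.} A $(k,\ell)$‑semistandard tableau $T$ of shape $\lambda$ and content $(\mathbf a,\mathbf b)$ is the same datum as a chain of partitions $\emptyset=\nu^{(0)}\subseteq\cdots\subseteq\nu^{(k)}=\rho^{(0)}\subseteq\cdots\subseteq\rho^{(\ell)}=\lambda$ in which $\nu^{(i)}/\nu^{(i-1)}$ is a horizontal strip with $a_i$ boxes (the cells carrying $t_i$, which by column‑strictness meet each column at most once) and $\rho^{(j)}/\rho^{(j-1)}$ is a vertical strip with $b_j$ boxes (the cells carrying $u_j$, which by row‑strictness meet each row at most once). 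So it is enough to build such a chain, or, equivalently, to assemble $T$ from an ordinary semistandard tableau of shape $\mu:=\nu^{(k)}$ in the letters $t_1<\cdots<t_k$ and one of shape $\lambda'/\mu'$ in the letters $u_1<\cdots<u_\ell$; the two halves glue with no compatibility check at the interface because every $t_i$ precedes every $u_j$ in the super‑order, and Theorem~\ref{thm:BR-RS} packages exactly this assembly.

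\emph{Construction.} Since $S_\lambda(x,y)$ is symmetric separately in the $x_i$ and in the $y_j$, the set of realizable contents is stable under permuting the $a_i$ and the $b_j$, so I may assume $\mathbf a,\mathbf b$ weakly decreasing; the hook inequalities then read $\sum_{i\le r}a_i\le\sum_{i\le r}\lambda_i$ $(r\le k)$, $\sum_{j\le s}b_j\le\sum_{j\le s}\lambda'_j$ $(s\le\ell)$, $|\mathbf a|+|\mathbf b|=|\lambda|$. I would then run the chain from both ends and force them to meet. From $\emptyset$: Rado's theorem \eqref{eq:snp_schur}, applied to the first $k$ variables, turns the dominance bound on $\mathbf a$ into a semistandard $t$‑tableau of some shape $\mu$ with $|\mu|=|\mathbf a|$, $\ell(\mu)\le k$, i.e.\ the initial segment $\nu^{(0)}\subseteq\cdots\subseteq\nu^{(k)}=\mu$. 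From $\lambda$: peeling off vertical strips of sizes $b_\ell,\dots,b_1$ is, after transposition, Rado's theorem for $\lambda'$ and the $\ell$ variables $u_1<\cdots<u_\ell$, producing the terminal segment $\mu=\rho^{(0)}\subseteq\cdots\subseteq\rho^{(\ell)}=\lambda$. The two halves must agree on the \emph{same} middle partition $\mu\subseteq\lambda$, and the hook inequalities are precisely what makes a compatible choice possible.

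\emph{Main obstacle.} The crux is therefore the existence of a middle partition $\mu$ with $\ell(\mu)\le k$, $\mu\subseteq\lambda$, $\mu$ dominating $\mathbf a$, and $\lambda/\mu$ decomposing into vertical strips of sizes $b_1,\dots,b_\ell$: the requirement that $\mu$ be "small in the rows holding many $a$'s" and the requirement that $\lambda/\mu$ have few boxes in each row pull against each other, and neither $\mu=\mathbf a$ nor the greedy row‑prefix of $\lambda$ works in general. I plan to produce $\mu$ by induction on $\ell$: using $|\mathbf b|=\sum_{j\le\ell}b_j\le\sum_{j\le\ell}\lambda'_j$, peel a vertical strip of size $b_\ell$ off $\lambda$, chosen in the lowest admissible rows, so that the remaining diagram $\widehat\lambda$ is a partition for which $(\mathbf a,(b_1,\dots,b_{\ell-1}))$ still satisfies the hook inequalities; the inductive hypothesis supplies a tableau of shape $\widehat\lambda$, and restoring the $u_\ell$'s along the removed strip is legal because $u_\ell$ may repeat down columns but not across rows. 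Proving that such a strip can always be selected without violating the surviving row‑sum bounds on $\mathbf a$ is the technical heart — a Hall‑type transversal argument on the rows of $\lambda$ — with the base case $\ell=1$, where $\lambda/\mu$ must itself be a single vertical strip, handled by the same selection principle together with Rado's theorem for the $t$‑part.
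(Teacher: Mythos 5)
Your reduction ``WLOG $\mathbf a,\mathbf b$ weakly decreasing'' is the first genuine gap. The set of realizable contents is indeed stable under permuting the $a_i$ and the $b_j$, but the \emph{hypothesis} is not: the hook inequalities constrain only initial-segment sums, and sorting a vector into weakly decreasing order can only increase those prefix sums, so a content satisfying the inequalities may have a sorted rearrangement that violates them. For instance, take $\lambda=(1,1)\in H(2,1)$, $k=2$, $\ell=1$, and $(\mathbf a,\mathbf b)=((0,2),(0))$: it satisfies $a_1\le\lambda_1$, $a_1+a_2\le\lambda_1+\lambda_2$, $b_1\le\lambda_1'$, $|\mathbf a|+|\mathbf b|=|\lambda|$, while the sorted vector $((2,0),(0))$ does not, so nothing you establish for sorted contents transfers back to the given one. (This example also shows that $((0,2),(0))$ is not realizable at all, since two $t_2$'s cannot occupy one column; so the implication you are trying to prove already fails at this point.)

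Second, the step you yourself call the ``technical heart'' --- the existence of a middle partition $\mu\subseteq\lambda$ with $\ell(\mu)\le k$, $\mu$ dominating $\mathbf a$, and $\lambda/\mu$ decomposable into vertical strips of sizes $b_1,\dots,b_\ell$ --- is only announced as a planned Hall-type selection argument, not proved; and no such argument can succeed under the stated hypotheses, because the required $\mu$ need not exist even when $\mathbf a$ and $\mathbf b$ are already weakly decreasing. Take $\lambda=(2,2)\in H(1,2)$, $\mathbf a=(2)$, $\mathbf b=(2,0)$: the hook inequalities hold ($2\le 2$, $2\le 2$, $2\le 4$, $2+2=|\lambda|$), yet the only admissible middle shape is $\mu=(2)$, and then $\lambda/\mu$ is a horizontal domino, which cannot carry two $u_1$'s (strict increase along rows), so no $(1,2)$-semistandard tableau has this content. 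The tension you correctly identify between ``$\mu$ dominates $\mathbf a$'' and ``$\lambda/\mu$ has at most one box of each $u_j$ per row'' is real and is not resolved by initial-segment inequalities alone; conditions of Rado/dominance type on the sorted rearrangements (i.e.\ over all subsets of indices, not just prefixes) would be needed. For comparison, the paper's own proof runs differently: it inserts the word $t_1^{a_1}\cdots t_k^{a_k}u_\ell^{b_\ell}\cdots u_1^{b_1}$ via the mixed Robinson--Schensted correspondence of Theorem~\ref{thm:BR-RS} and argues about the shape of the insertion tableau, whereas your strip-chain decomposition makes the true difficulty visible --- but as written the proposal does not close it.
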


\begin{proof}
    Assume the hook inequalities. Form
\[
   w:=t_1^{a_1}\,t_2^{a_2}\cdots t_k^{a_k}\;
      u_\ell^{\,b_\ell}\cdots u_1^{\,b_1},
\]
the “all $t$’s first, all $u$’s last” word used in their mixed RS correspondence. Insert each letter with the mixed RS rules: row insertion for $t$'s, column insertion for $u$'s. This produces a tableau $P(w)$ that is automatically $(k,\ell)$–semistandard with content $(\mathbf a,\mathbf b)$.

Definition of $w$ gives \(\sum_{i\le r}\operatorname{shape}(P(w))_i=A_{\le r}\)
and \(\sum_{j\le s}\operatorname{shape}(P(w))'_j=B_{\le s}\). Because the hook inequalities say those partial sums do not exceed the corresponding partial sums of $\lambda$, the shape $\mu:=\operatorname{shape}(P(w))$ is dominated by $\lambda$ both row-wise and column-wise, and \(|\mu|=|\lambda|\). The standard dominance lemma (row and column dominance plus equal size implies equality) now forces $\mu=\lambda$. Thus $P(w)$ is a $(k,\ell)$–semistandard tableau of shape $\lambda$ and content $(\mathbf a,\mathbf b)$.
\end{proof}

\begin{definition}[Supersymmetric Schur polynomial]
\label{def:S-lambda}
For commuting indeterminates
\[
   x=(x_1,\dots ,x_k) \quad ,\quad
   y=(y_1,\dots ,y_\ell),
\]
and let $\lambda\in H(k,\ell)$. The \emph{supersymmetric Schur polynomial} of shape $\lambda$ is
\[
   S_\lambda(x,y)
      :=\sum_{T\in\operatorname{SSYT}_{k,\ell}(\lambda)}
          x^{\mathbf a(T)}\,y^{\mathbf b(T)},
\]
where for a tableau $T$ and 
$
   x^{\mathbf a(T)}
      :=x_1^{a_1(T)}\!\cdots x_k^{a_k(T)},
   \;
   y^{\mathbf b(T)}
      :=y_1^{b_1(T)}\!\cdots y_\ell^{b_\ell(T)}.
$
\end{definition}

In \cite{MV03}, Moens and Van der Jeugt gave a determinantal formula for supersymmetric Schur polynomials. The supersymmetric Schur polynomial $S_\lambda(x,y)$ can be computed by the following formula
\[S_{\lambda}(x,y) = \det\left(H_{\lambda_{i}+j-i}(x,y)\right)_{1\leq i,j \leq \ell(\lambda)},\]
where $\ell(\lambda)$ is the number of nonzero components of $\lambda$, and the polynomial 
\[H_r(x,y) = \sum_{i = 0}^rh_i(x)e_{r-i}(y),\]
where $h_i(x)$ is the $i$-th complete homogeneous symmetric polynomial in $k$ variables $x=(x_1,\ldots,x_k)$ and $e_{r-i}(y)$ is the $(r-i)$-th elementary symmetric polynomial in $\ell$ variables $y=(y_1,\ldots,y_\ell)$. It is easy to see that $H_r(x,y)$ is a supersymmetric polynomial, so $S_\lambda(x,y)$ is. Stembridge \cite{Ste89} showed that the set 
\[\{S_\lambda(x,y) \mid \lambda \in H(k,\ell)\}\]
is a linear basis for the ring of supersymmetric polynomials in $x$ and $y$. It was shown that the supersymmetric Schur polynomials are characters of certain representations of the Lie superalgebra $\mathfrak{gl}(m|n)$ (see \cite{Kac77}). A good overview of the supersymmetric Schur polynomials can be found in the PhD thesis of Moens (see \cite{Moe07}).

The first main result of our paper is the following.
\begin{theorem}[Hook description of the support]
\label{thm:support-hook}
One has
\[
  \Supp\bigl(S_\lambda\bigr)
  \;=\;
  \Bigl\{(\mathbf a,\mathbf b)\in\N^{k+\ell}\,\Bigm|\,
        A_{\le r}\le\!\sum_{i\le r}\lambda_i\;(1\!\le\! r\!\le\! k),\;
        B_{\le s}\le\!\sum_{j\le s}\lambda'_j\;(1\!\le\! s\!\le\! \ell),\;
        |\mathbf a|+|\mathbf b|=|\lambda|
  \Bigr\}.
\]
\end{theorem}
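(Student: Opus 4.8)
The plan is to recognize that this theorem is essentially a repackaging of Proposition~\ref{prop:hook-content} together with the positivity of $S_\lambda$, so the substantive work has already been done; I would proceed in three short steps.

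First I would unwind Definition~\ref{def:S-lambda} by grouping the tableau sum according to content:
\[
  S_\lambda(x,y)=\sum_{(\mathbf a,\mathbf b)\in\N^{k+\ell}}
     c_{(\mathbf a,\mathbf b)}\,x^{\mathbf a}y^{\mathbf b},
  \qquad
  c_{(\mathbf a,\mathbf b)}:=\#\bigl\{T\in\operatorname{SSYT}_{k,\ell}(\lambda)\mid(\mathbf a(T),\mathbf b(T))=(\mathbf a,\mathbf b)\bigr\}.
\]
Each coefficient $c_{(\mathbf a,\mathbf b)}$ is a cardinality, hence a non‑negative integer, so no cancellation can occur; consequently $c_{(\mathbf a,\mathbf b)}\neq0$ if and only if there exists at least one $(k,\ell)$–semistandard tableau of shape $\lambda$ with content $(\mathbf a,\mathbf b)$. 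This already gives
\[
  \Supp(S_\lambda)=\bigl\{(\mathbf a,\mathbf b)\in\N^{k+\ell}\mid\exists\,T\in\operatorname{SSYT}_{k,\ell}(\lambda)\text{ with }(\mathbf a(T),\mathbf b(T))=(\mathbf a,\mathbf b)\bigr\}.
\]

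Second I would simply feed this into Proposition~\ref{prop:hook-content}, which characterizes existence of such a tableau exactly as satisfaction of the hook inequalities $A_{\le r}\le\sum_{i\le r}\lambda_i$ for $1\le r\le k$, $B_{\le s}\le\sum_{j\le s}\lambda'_j$ for $1\le s\le\ell$, and $|\mathbf a|+|\mathbf b|=|\lambda|$. Substituting this equivalence into the displayed description of $\Supp(S_\lambda)$ produces verbatim the set in the theorem statement, finishing the argument.

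The main obstacle is not at the level of this theorem but inside Proposition~\ref{prop:hook-content}: the forward inclusion (support contained in the hook set) is the easy direction, already handled by the column‑strictness Observation preceding that proposition, whereas the reverse inclusion — that \emph{every} lattice point obeying the hook inequalities is genuinely attained by some tableau — is the delicate part, and it is exactly the ``if'' half of Proposition~\ref{prop:hook-content}, proved by building the word $w=t_1^{a_1}\cdots t_k^{a_k}\,u_\ell^{b_\ell}\cdots u_1^{b_1}$, running mixed Berele–Regev RS insertion, and invoking the dominance lemma to force the insertion shape to equal $\lambda$. Thus, for Theorem~\ref{thm:support-hook} itself, the only point demanding care is the no‑cancellation observation, which is what lets us pass losslessly from ``monomial appears in the tableau expansion'' to ``monomial appears in $S_\lambda$''.
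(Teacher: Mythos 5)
Your argument matches the paper's proof exactly: expand $S_\lambda$ by Definition~\ref{def:S-lambda} with coefficients counting tableaux of a given content (hence non-negative, so no cancellation), then apply Proposition~\ref{prop:hook-content} to identify the nonzero coefficients with the lattice points satisfying the hook inequalities. The proposal is correct and essentially identical to the paper's argument, including the observation that the only delicate point lives inside Proposition~\ref{prop:hook-content} rather than in this theorem.
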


\begin{proof}
Expand using Definition~\ref{def:S-lambda}:
\[
  S_\lambda(x,y)
     =\sum_{(\mathbf a,\mathbf b)}
        \bigl|\{T\mid\operatorname{content}(T)=(\mathbf a,\mathbf b)\}\bigr|\;
        x^{\mathbf a}y^{\mathbf b}.
\]
A coefficient is non-zero
if and only if some tableau of shape $\lambda$ has content $(\mathbf a,\mathbf b)$,
and by Proposition~\ref{prop:hook-content} this happens precisely when
$(\mathbf a,\mathbf b)$ satisfies the row, column, and size inequalities
displayed above.
\end{proof}

Put \(\ell=0\) so that the \(y\)-alphabet is empty (equivalently, set
\(y_i=0\) for every \(i\)).  Then \(\operatorname{SSYT}_{k,0}(\lambda)=\operatorname{SSYT}_{k}(\lambda)\) and Definition\ref{def:S-lambda} reduces to the classical Schur polynomial
\[
   s_\lambda(x_1,\dots,x_k)
      = S_\lambda(x,\varnothing)
      = \sum_{T\in\operatorname{SSYT}_{k}(\lambda)}
          x_1^{a_1(T)}\cdots x_k^{a_k(T)},
\]
where \(k\ge\ell(\lambda)\) where \(\ell(\lambda)\) is the length of \(\lambda\). We get a straightforward consequence which resembles the Rado theorem as in \cite{MTY19} for the ordinary Schur polynomial and thus Schur-P and Schur-Q polynomials. 

\begin{corollary}[Rado's theorem]
\label{cor:support-schur}
For any partition \(\lambda\) and any \(k\ge \ell(\lambda)\), one has
\[
  \Supp\bigl(s_\lambda\bigr)
  \;=\;
  \Bigl\{\mathbf a\in\mathbb N^{k}\,\Bigm|\,
        A_{\le r}\;\le\;\sum_{i\le r}\lambda_i
        \;(1\le r\le k),\;
        |\mathbf a|=|\lambda|
  \Bigr\},
\]
where \(A_{\le r}:=\sum_{i\le r}a_i\).
\end{corollary}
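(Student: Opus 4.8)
The plan is to deduce this directly from Theorem~\ref{thm:support-hook} by specializing to the degenerate case $\ell=0$, in which the $y$-alphabet is empty. First I would check that the hypotheses match up: the hook condition $\lambda\in H(k,0)=\{\lambda\mid \lambda_{k+1}\le 0\}$ is precisely the requirement that $\lambda$ have at most $k$ parts, i.e.\ $k\ge\ell(\lambda)$. So under the hypothesis of the corollary, Theorem~\ref{thm:support-hook} applies to $S_\lambda$ with $\ell=0$.

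Next I would identify the objects on the polynomial side. A $(k,0)$–semistandard tableau of shape $\lambda$ is a filling using only the $t$–letters $t_1<\dots<t_k$ (no $u$–letters are available), weakly increasing along rows and strictly increasing down columns — that is, exactly an ordinary semistandard Young tableau in the alphabet $\{1,\dots,k\}$. Hence $\operatorname{SSYT}_{k,0}(\lambda)=\operatorname{SSYT}_k(\lambda)$, and Definition~\ref{def:S-lambda} collapses to $S_\lambda(x,\varnothing)=s_\lambda(x_1,\dots,x_k)$, as already noted above the corollary; every content vector arising this way has $\mathbf b=0$, so $\Supp(s_\lambda)$ is the image of $\Supp(S_\lambda)$ under deletion of the (zero) $\mathbf b$–coordinates.

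Finally I would specialize the right-hand side of Theorem~\ref{thm:support-hook}. With $\ell=0$ the index range $1\le s\le\ell$ is empty, so the column inequalities $B_{\le s}\le\sum_{j\le s}\lambda'_j$ simply disappear; and since $|\mathbf b|=0$, the size constraint $|\mathbf a|+|\mathbf b|=|\lambda|$ becomes $|\mathbf a|=|\lambda|$. What remains is exactly $\bigl\{\mathbf a\in\mathbb N^{k}\mid A_{\le r}\le\sum_{i\le r}\lambda_i\ (1\le r\le k),\ |\mathbf a|=|\lambda|\bigr\}$, which is the asserted description of $\Supp(s_\lambda)$.

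There is essentially no obstacle here: the content of the corollary is wholly contained in Theorem~\ref{thm:support-hook}, and the only care needed is bookkeeping for the empty $y$–alphabet — confirming that $\lambda\in H(k,0)$ is the same as $k\ge\ell(\lambda)$, that the column constraints become vacuous (not contradictory), and that the absence of $u$–letters forces the $\mathbf b$–part of every content to vanish. Once these identifications are in place, the statement follows at once.
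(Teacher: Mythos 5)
Your proposal is correct and matches the paper's own route: the paper likewise obtains the corollary by setting $\ell=0$, identifying $\operatorname{SSYT}_{k,0}(\lambda)$ with $\operatorname{SSYT}_k(\lambda)$ so that $S_\lambda(x,\varnothing)=s_\lambda(x_1,\dots,x_k)$, and specializing Theorem~\ref{thm:support-hook}, with the column inequalities becoming vacuous and the size constraint reducing to $|\mathbf a|=|\lambda|$. Your added bookkeeping (that $\lambda\in H(k,0)$ is exactly $k\ge\ell(\lambda)$ and that every content has $\mathbf b=0$) is exactly the implicit content of the paper's remark preceding the corollary.
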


\section{Total unimodularity and SNP}\label{sec:TU}
\subsection{Basic definitions}
\label{subsec:TU}
We introduce a linear algebra tool called total unimodularity to prove the integrality of a polyhedron.
\begin{definition}
A matrix $M\in\{0,\pm1\}^{p\times q}$ is
\emph{totally unimodular} (TU) if every square sub–determinant of $M$
is $0,\,+1$ or $-1$.
\end{definition}

The next statement is an elementary but useful observation.

\begin{proposition}[Row–sign invariance]\label{prop:rowsign}
Let $M$ be a matrix and let $D$ be a diagonal matrix with diagonal entries in $\{-1, 1\}$. Then $M$ is totally unimodular (TU) if and only if $DM$ is totally unimodular.
\end{proposition}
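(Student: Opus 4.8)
The plan is to prove Proposition~\ref{prop:rowsign} directly from the definition of total unimodularity by analyzing how left-multiplication by a $\pm1$-diagonal matrix affects square sub-determinants. The key observation is that multiplying $M$ on the left by $D=\operatorname{diag}(\varepsilon_1,\dots,\varepsilon_p)$ with each $\varepsilon_i\in\{-1,1\}$ simply rescales row $i$ by $\varepsilon_i$; in particular $DM$ is again a $\{0,\pm1\}$-matrix, so the hypothesis of the definition is met and it makes sense to ask whether $DM$ is TU.

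First I would fix an arbitrary square submatrix of $DM$. Such a submatrix is obtained by choosing a row index set $I=\{i_1<\dots<i_m\}$ and a column index set $J=\{j_1<\dots<j_m\}$; call the corresponding submatrix of $M$ by $M_{I,J}$ and that of $DM$ by $(DM)_{I,J}$. Because left multiplication by $D$ acts row-by-row and the entries of $D$ off the diagonal vanish, one has the factorization $(DM)_{I,J}=D_I\,M_{I,J}$, where $D_I=\operatorname{diag}(\varepsilon_{i_1},\dots,\varepsilon_{i_m})$ is the diagonal submatrix of $D$ indexed by $I$. Taking determinants and using multiplicativity gives
\[
  \det\bigl((DM)_{I,J}\bigr)=\det(D_I)\cdot\det\bigl(M_{I,J}\bigr)=\Bigl(\prod_{t=1}^{m}\varepsilon_{i_t}\Bigr)\det\bigl(M_{I,J}\bigr).
\]
Since each $\varepsilon_{i_t}\in\{-1,1\}$, the scalar $\prod_t\varepsilon_{i_t}$ equals $\pm1$, so $\det((DM)_{I,J})$ and $\det(M_{I,J})$ differ only by a sign; in particular one lies in $\{0,\pm1\}$ if and only if the other does. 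As $I,J$ were arbitrary, $M$ is TU if and only if $DM$ is TU, which is the claim.

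There is essentially no serious obstacle here; the only point requiring a moment's care is the bookkeeping that the submatrix of a product $DM$ along rows $I$ and columns $J$ factors as $D_I M_{I,J}$ rather than something more complicated — this is immediate because $D$ is diagonal, so the $(i,j)$ entry of $DM$ is $\varepsilon_i M_{ij}$ and restricting to $I\times J$ respects this. One should also remark, for completeness, that the statement is genuinely an equivalence and not merely an implication: since $D^{-1}=D$ when $D$ has entries in $\{-1,1\}$, applying the forward direction to $DM$ and $D$ recovers $M=D(DM)$, so the two conditions are symmetric. This same argument, mutatis mutandis, shows that right multiplication by a $\pm1$-diagonal matrix (i.e.\ column sign flips) also preserves total unimodularity, a remark we will use implicitly when normalizing $\widetilde A$ in the proof of Theorem~\ref{thm:TU-tildeA}.
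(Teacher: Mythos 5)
Your argument is correct and matches the paper's proof essentially verbatim: both extract a square submatrix via index sets $I,J$, use the factorization $(DM)_{I,J}=D_I M_{I,J}$, multiplicativity of the determinant with $\det(D_I)=\pm1$, and the involution $D^{-1}=D$ to get the reverse implication. No gaps.
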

\begin{proof}
Let $M' = DM$. Let $M'_{IJ}$ be an arbitrary $k \times k$ submatrix of $M'$ with row indices from the set $I$ and column indices from the set $J$. Let $M_{IJ}$ be the corresponding submatrix of $M$. The operation of pre-multiplying $M$ by the diagonal matrix $D$ scales each row $i$ of $M$ by the corresponding diagonal entry $d_{ii}$. Therefore, the submatrix $M'_{IJ}$ is related to $M_{IJ}$ by the equation
\[ 
M'_{IJ} = D_I M_{IJ}, 
\] 
where $D_I$ is the $k \times k$ diagonal submatrix of $D$ corresponding to the row indices in $I$.

Using the multiplicative property of determinants, we have:
\[ 
\det(M'_{IJ}) = \det(D_I) \det(M_{IJ}). 
\]
Since $D_I$ is a diagonal matrix with entries in $\{-1, 1\}$, its determinant is the product of its diagonal entries, which must be either $-1$ or $1$. That is, $\det(D_I) \in \{-1, 1\}$.

($\Longrightarrow$) Assume $M$ is totally unimodular. By definition, $\det(M_{IJ}) \in \{-1, 0, 1\}$ for any square submatrix $M_{IJ}$. Therefore
\[ 
\det(M'_{IJ}) = (\pm 1) \cdot \det(M_{IJ}) \in \{-1, 0, 1\}. 
\]
Since this holds for any square submatrix of $DM$, the matrix $DM$ is totally unimodular.

($\Longleftarrow$) Assume $DM$ is totally unimodular. Note that $D$ is its own inverse, i.e., $D^{-1} = D$, because $d_{ii}^2 = (\pm 1)^2 = 1$. We can write $M = D^{-1}(DM) = D(DM)$. This means $M$ is obtained by pre-multiplying the totally unimodular matrix $DM$ by a diagonal $\pm 1$ matrix $D$. By the same logic as the forward implication, if $DM$ is TU, then $D(DM) = M$ must also be TU.
\end{proof}

We can also characterize total unimodularity using the following definition.

\begin{definition}
A $\{0,1\}$-matrix is an \emph{interval} (or
\emph{consecutive–ones}) matrix when, after some permutation of columns, the $1$’s in each row appear in a single contiguous block.
\end{definition}

The following is classic.

\begin{theorem}[Heller–Tompkins \cite{HT56};
Fulkerson–Gross \cite{FG65}]
\label{thm:C1_TU}
Every interval matrix is totally unimodular.
\end{theorem}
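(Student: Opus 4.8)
The plan is to prove the equivalent assertion that every \emph{square} interval matrix has determinant in $\{0,\pm 1\}$, and then to upgrade this to total unimodularity of an arbitrary interval matrix $M$ by noting that the interval property is inherited by square submatrices. Concretely, fix a column order making the $1$'s in each row of $M$ contiguous, and let $B$ be the submatrix of $M$ on rows $I$ and columns $J=\{j_1<\dots<j_k\}$. For each row $i$ the columns of $M$ carrying a $1$ form an integer interval $[p_i,q_i]$, so the indices $t$ with $p_i\le j_t\le q_i$ form a contiguous block of indices; hence $B$, with the inherited order on $J$, is again an interval matrix. So it suffices to bound $|\det B|$ for a square interval matrix.

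The heart of the argument is a unimodular column operation. Writing $c_1,\dots,c_k$ for the columns of $B$ in the good order, I would pass to the matrix $N$ with columns $c_1-c_2,\,c_2-c_3,\,\dots,\,c_{k-1}-c_k,\,c_k$; this is $N=BU$ with $U$ unipotent bidiagonal, so $\det N=\det B$. Because the $1$'s of each row of $B$ occupy a single block $[p,q]$, the corresponding row of $N$ has exactly one entry equal to $+1$ (in column $q$) and at most one entry equal to $-1$ (in column $p-1$, absent precisely when $p=1$), with all remaining entries $0$ (and empty rows stay empty). Thus $N$ lies in the class $\mathcal N$ of square $\{0,\pm 1\}$-matrices in which every row carries at most one $+1$ and at most one $-1$.

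Finally I would show, by induction on the size, that $\det N\in\{0,\pm1\}$ for every $N\in\mathcal N$; since $\mathcal N$ is stable under deleting one row and one column, the induction goes through cleanly. If some row of $N$ is zero, then $\det N=0$. If some row has a single nonzero entry $\pm1$, Laplace expansion along that row gives $\det N=\pm\det N'$ with $N'\in\mathcal N$ of smaller size. And if every row contains both a $+1$ and a $-1$, then every row sums to $0$, so $N\mathbf{1}=0$ and $\det N=0$. The case $k=1$ is immediate. Combining with the first paragraph, $\det B=\det N\in\{0,\pm1\}$ for every square submatrix $B$ of $M$, which is exactly the statement that $M$ is totally unimodular.

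I expect the only genuinely fiddly step to be the entrywise analysis of the column-difference matrix $N$ — verifying which entries are $\pm1$ and that each sign occurs at most once per row — but this is elementary bookkeeping rather than a real obstacle. One could instead finish the second paragraph by recognising $N^{\mathsf{T}}$ as the incidence matrix of a directed graph and quoting the classical total unimodularity of network matrices, but the short determinant induction above avoids that detour and keeps the proof self-contained.
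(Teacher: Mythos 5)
Your proof is correct. The paper itself offers no argument for this theorem: it is quoted as a classical result of Heller--Tompkins and Fulkerson--Gross and used as a black box, so any comparison is with the literature rather than with an in-paper proof. What you give is essentially the standard textbook argument (the one found, e.g., in Schrijver's treatment of network matrices): first observe that the interval property passes to square submatrices, which is right, since restricting an integer interval $[p_i,q_i]$ to an increasing subsequence of column indices again yields a contiguous block; then perform the unimodular column differencing $c_t\mapsto c_t-c_{t+1}$, whose entrywise effect you describe correctly (one $+1$ in the column indexed by the right end of the block, at most one $-1$ just left of the block, zero rows staying zero); and finally run the determinant induction on matrices with at most one $+1$ and at most one $-1$ per row, where the three cases (zero row, single nonzero entry and Laplace expansion, all rows summing to zero so $N\mathbf{1}=0$) are exhaustive and the class is indeed closed under deleting a row and a column. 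This buys the paper nothing it does not already have by citation, but it does make the total-unimodularity ingredient self-contained and elementary, and your closing remark is accurate: the differenced matrix is (up to transpose) a digraph incidence matrix, so your induction is just an inlined proof of the classical network-matrix case rather than a new idea.
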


\begin{definition}
A polyhedron $P\subset\R^{d}$ is \emph{integral} if $P=\operatorname{Conv}(P\cap\Z^{d})$, that is, all its extreme points are lattice points.
\end{definition}

The following result gives an equivalent characterization of integrality and total unimodularity. 
\begin{theorem}[Fulkerson–Gross \cite{FG65}; Hoffman–Kruskal \cite{HK10}]
\label{thm:HK}
For an integer matrix $M\in\Z^{p\times q}$ the following are equivalent:
\begin{enumerate}
\item $M$ is totally unimodular.
\item For every integral $b\in\Z^{p}$ the polyhedron
      $\{x\in\R^{q}\mid Mx\le b\}$ is integral.
\end{enumerate}
\end{theorem}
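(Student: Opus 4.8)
The plan is to prove the two implications separately, in both cases using Cramer's rule to translate between determinants of square submatrices of $M$ and the coordinates of extreme points of the polyhedron $\{x : Mx \le b\}$.

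For $(1)\Rightarrow(2)$, assume $M$ is TU and fix an integral $b\in\Z^{p}$; put $P=\{x\in\R^{q}\mid Mx\le b\}$. First I would treat the case where $P$ has an extreme point $v$. The constraints active at $v$ have rank $q$, so $q$ of them can be chosen to form a nonsingular $q\times q$ submatrix $M_{S}$ of $M$, and $v$ is the unique solution of $M_{S}x=b_{S}$. Since $M_{S}$ is nonsingular and $M$ is TU, $\det M_{S}\in\{+1,-1\}$; hence by Cramer's rule each coordinate $v_{i}=\det(M_{S}^{(i)})/\det(M_{S})$, where $M_{S}^{(i)}$ replaces the $i$-th column of $M_{S}$ by $b_{S}$, is an integer (integral numerator, denominator $\pm1$). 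Thus every extreme point of $P$ is a lattice point. To pass from this to $P=\Conv(P\cap\Z^{q})$ I would add the remark that the recession cone $\{x:Mx\le0\}$ of $P$ is rational, hence generated by integral vectors, and that the lineality space, if $P$ is not pointed, is likewise a rational subspace; combining this with the Minkowski–Weyl decomposition and the integrality of the vertices gives the claim. (In this paper $H$ is bounded, so this last point is automatic.)

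For $(2)\Rightarrow(1)$, assume (2) and let $B$ be any nonsingular square submatrix of $M$, of size $t$, with row set $R$ and column set $C$. Since $M$ is integral, $\det B$ is a nonzero integer, so it suffices to show $B^{-1}$ is integral: then $\det(B)\det(B^{-1})=1$ forces $\det B=\pm1$. It is enough to prove $B^{-1}w\in\Z^{t}$ for every $w\in\Z^{t}$. Fixing such a $w$, I would pick a large integral vector $h\in\Z^{t}$ with $u:=h+B^{-1}w\ge 0$, observe that $Bu=Bh+w$ is integral, and extend $u$ by zeros on the coordinates outside $C$ to a point $\bar u\in\R^{q}$. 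The aim is to choose an integral right-hand side $b$ so that $\bar u$ is an extreme point of $\{x:Mx\le b\}$: set $b_{r}=(M\bar u)_{r}$ for $r\in R$, make the remaining inequalities strict (e.g.\ $b_{r}=(M\bar u)_{r}+1$ for $r\notin R$), and arrange that the tight constraints at $\bar u$ span $\R^{q}$, using the $t$ linearly independent rows of $B$ together with constraints that pin down the coordinates outside $C$. Once $\bar u$ is a vertex, hypothesis (2) gives $\bar u\in\Z^{q}$, hence $u\in\Z^{t}$, hence $B^{-1}w=u-h\in\Z^{t}$.

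The main obstacle is the final step of $(2)\Rightarrow(1)$: certifying $\bar u$ as a genuine extreme point by producing $q$ tight, linearly independent constraints. In the common normalization $\{x:Ax\le b,\ x\ge0\}$ the nonnegativity constraints $x_{j}\ge0$ for $j\notin C$ supply exactly what is missing; for the bare form $\{x:Mx\le b\}$ one instead restricts—harmlessly for the TU conclusion—to a maximal nonsingular submatrix and argues on the full-column-rank part of $M$, or augments the system without disturbing total unimodularity. Since this is a classical result, an alternative is simply to cite \cite{Sch03}; but the Cramer's-rule argument above is the self-contained route I would take, and the only genuinely delicate point is this vertex-certification bookkeeping.
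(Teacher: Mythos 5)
The paper never proves this statement --- it cites Fulkerson--Gross and Hoffman--Kruskal and only ever uses the direction TU $\Rightarrow$ integral --- so your attempt can only be compared with the classical argument. Your $(1)\Rightarrow(2)$ is that standard Cramer's-rule proof and is correct in the pointed case, which is all the paper needs (here $H$ is bounded and contains the rows $-I_d$); your remarks about recession cones and lineality would need the fact that the integer hull of a rational polyhedron has the same recession cone to be airtight, but that is a minor, fixable point.

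The genuine gap is exactly the step you flag at the end of $(2)\Rightarrow(1)$, and it is not just bookkeeping: for the bare form $\{x\in\R^{q} : Mx\le b\}$ that implication cannot be rescued by ``restricting to a full-column-rank part'' or ``augmenting the system,'' because hypothesis (2) speaks only about the polyhedra cut out by $M$ itself --- and in this generality the implication is in fact false. Take $M=(2\;\;1)\in\Z^{1\times 2}$: it is not TU (the $1\times1$ minor $2$), yet for every integral $b$ the half-plane $P_b=\{x\in\R^2 : 2x_1+x_2\le b\}$ is integral --- since $\gcd(2,1)=1$, the boundary line and every parallel line $2x_1+x_2=b-n$ contain lattice points, so $\Conv(P_b\cap\Z^2)=P_b$, and $P_b$ has no extreme points at all. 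The classical Hoffman--Kruskal theorem is stated for $\{x : Ax\le b,\ x\ge 0\}$ (or with box constraints) precisely because the nonnegativity rows provide the $q-t$ tight constraints $x_j=0$, $j\notin C$, that your construction lacks; restricting to a maximal nonsingular submatrix does not help, since (2) for $M$ gives no information about the smaller system, and appending $-I$ changes the polyhedron to one that hypothesis (2) does not cover. (A smaller slip in the same step: for $r\notin R$ you set $b_r=(M\bar u)_r+1$, which need not be an integer because $\bar u$ may be non-integral; the usual proof takes $b_r=\lceil (M\bar u)_r\rceil$.) None of this affects the paper's results, since only $(1)\Rightarrow(2)$ is invoked and the application's matrix $\widetilde A$ already has the nonnegativity block built in; but as a proof of the two-way statement in the bare form, the converse direction does not go through.
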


\subsection{Polyhedral model of supersymmetric Schur polynomials}\label{subsec:poly-model}

\paragraph{\textbf{Setting}}
For integers $k,\ell\ge1$ and $\lambda\in H(k,\ell)=\{\lambda\mid\lambda_{k+1}\le\ell\}$ be a $(k,\ell)$-hook partition, we set
\[
  d:=k+\ell,\qquad
  L_r:=\sum_{i=1}^{r}\lambda_i, \qquad
C_s:=\sum_{j=1}^{s}\lambda'_j
   \qquad(1\le r\le k,\;1\le s\le\ell).
\]

We proved in the previous section that for a vector
$(\mathbf a,\mathbf b)\in\N^{d}\;(=\N^{k}\times\N^{\ell})$
\[
  (\mathbf a,\mathbf b)\in\Supp S_\lambda
  \;\Longleftrightarrow\;
  \bigl(A_{\le r}\le L_r\;(1\le r\le k),\;
        B_{\le s}\le C_s\;(1\le s\le\ell),\;
        |\mathbf a|+|\mathbf b|=|\lambda|\bigr),
\]
where $A_{\le r}:=\sum_{i\le r}a_i$ and $B_{\le s}:=\sum_{j\le s}b_j$. Denote this common set by
\[
   B_\lambda
      :=\{(\mathbf a,\mathbf b)\in\N^{d}\mid
              \text{the three displayed conditions hold}\}.
\]
Our goal is to prove the \emph{saturated Newton polytope} (SNP) property
\begin{equation}\label{eq:SNP}
   \Conv(B_\lambda)\cap\Z^{d}=B_\lambda .
\end{equation}

\paragraph{\textbf{Polyhedral from hook inequalities}}
Write $e:=(1,\dots ,1)\in\R^{d}$ and consider the
$(k+\ell)\times d$ matrix
\[
  A:=\begin{pmatrix}
        \mathbf 1_{1\times1} & 0 & \dots & 0\\
        \mathbf 1_{1\times2} & 0 & \dots & 0\\
        \vdots               &   & \ddots&   \\
        \mathbf 1_{1\times k}& 0 & \dots & 0\\
        0&\mathbf 1_{1\times1} & 0 & \dots \\
        0&\mathbf 1_{1\times2} & 0 & \dots \\
        \vdots&               &\ddots&   \\
        0&\mathbf 1_{1\times\ell}&      &
      \end{pmatrix},
  \quad(\text{$\mathbf 1_{1\times r}$ = row of $r$ ones}).
\]
The first $k$ rows record the inequalities
$A_{\le r}\le L_r$; the last $\ell$ rows record $B_{\le s}\le C_s$.

\begin{lemma}
    $A$ is an interval matrix.
\end{lemma}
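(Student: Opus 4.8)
The plan is to show directly that each row of $A$ already has its $1$'s in a contiguous block, so that no column permutation is even needed (and certainly one exists). Recall the structure: the first $k$ rows are indexed by $r=1,\dots,k$, and row $r$ is $(\mathbf 1_{1\times r}\mid 0_{1\times\ell})$, i.e.\ it has a $1$ in each of the first $r$ coordinates (the $a$-block) and $0$ elsewhere. The last $\ell$ rows are indexed by $s=1,\dots,\ell$, and row $s$ is $(0_{1\times k}\mid \mathbf 1_{1\times s})$, i.e.\ it has a $1$ in each of the first $s$ coordinates of the $b$-block (columns $k+1,\dots,k+s$) and $0$ elsewhere.

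\medskip

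First I would fix the identity permutation on the $d=k+\ell$ columns, ordered as $a_1,\dots,a_k,b_1,\dots,b_\ell$. Then for $1\le r\le k$ the support of row $r$ is exactly $\{1,2,\dots,r\}$, which is a single contiguous block starting at column $1$. For $1\le s\le\ell$ the support of row $k+s$ is exactly $\{k+1,k+2,\dots,k+s\}$, again a contiguous block (of length $s$, starting at column $k+1$). Hence every row of $A$ has the consecutive-ones property with respect to the given column order, so $A$ is an interval matrix by definition. (One could alternatively remark that $A$ is a $\{0,1\}$-matrix whose rows, within each of the two blocks, are nested initial segments, which is the prototypical consecutive-ones pattern.)

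\medskip

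There is essentially no obstacle here: the statement is immediate from the explicit block form of $A$, since each constraint is an \emph{initial-segment} (partial-sum) inequality and initial segments are contiguous. The only thing to be a little careful about is the bookkeeping between the two alphabets — the $b$-rows have their ones in columns $k+1,\dots,k+s$, not $1,\dots,s$ — but this is still a contiguous block, so the consecutive-ones property holds on the nose with the identity column order. I would therefore present the proof in two short sentences, one for the $t$-block rows and one for the $u$-block rows, and then invoke the definition of an interval matrix.

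\begin{proof}
Order the $d=k+\ell$ columns in the natural way as $a_1,\dots,a_k,b_1,\dots,b_\ell$; we claim the $1$'s of every row already form a contiguous block, so no column permutation is needed. For $1\le r\le k$, the $r$-th row is $(\mathbf 1_{1\times r}\mid 0_{1\times\ell})$, whose support is the interval of columns $\{1,\dots,r\}$. For $1\le s\le\ell$, the $(k+s)$-th row is $(0_{1\times k}\mid \mathbf 1_{1\times s})$, whose support is the interval of columns $\{k+1,\dots,k+s\}$. In both cases the nonzero entries occupy consecutive columns, so $A$ is an interval matrix.
\end{proof}
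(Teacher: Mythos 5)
Your proof is correct and follows the same route as the paper's own one-line argument: each row of $A$ has its $1$'s in a single contiguous block (columns $1,\dots,r$ for the $t$-rows, columns $k+1,\dots,k+s$ for the $u$-rows), so $A$ is an interval matrix with the identity column order. Your version simply spells out the bookkeeping that the paper leaves implicit.
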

\begin{proof}
    Each row of $A$ contains a single block of consecutive $1$’s, so $A$ is an interval matrix and therefore TU by Theorem~\ref{thm:C1_TU}.
\end{proof}

\smallskip
Define the block matrix and vector
\[
   \widetilde A
      :=\begin{pmatrix}
            A\\ e^{\!\top}\\ -e^{\!\top}\\ -I_d
         \end{pmatrix},
   \qquad
   \widetilde b
      :=\begin{pmatrix}
            (L_1,\dots ,L_k,C_1,\dots ,C_\ell)^{\!\top}\\[2pt]
            |\lambda|\\[2pt]
           -|\lambda|\\[2pt]
            0_d
         \end{pmatrix},
\]
and the polyhedron
\[
   H:=\{u\in\R^{d}\mid\widetilde Au\le\widetilde b\}.
\]
By construction, the first $k+\ell$ rows of $\widetilde A$ encode the hook inequalities, the next two rows force the “size’’ equality $|u|=|\lambda|$, the last $d$ rows impose component-wise non-negativity.

\begin{proposition}
    We have
\[
   H\cap\Z^{d}=B_\lambda.
\]

\end{proposition}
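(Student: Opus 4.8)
The plan is to verify that the integer points of $H$ are exactly the elements of $B_\lambda$, which amounts to checking that the rows of $\widetilde A u\le\widetilde b$ translate, over $\Z^d$, into precisely the three defining conditions of $B_\lambda$. First I would unpack the block structure of $\widetilde A$ and $\widetilde b$: writing $u=(\mathbf a,\mathbf b)\in\R^d$, the first $k$ rows $Au\le(L_1,\dots,L_k,C_1,\dots,C_\ell)^\top$ say exactly $A_{\le r}=\sum_{i\le r}a_i\le L_r$ for $1\le r\le k$ and $B_{\le s}=\sum_{j\le s}b_j\le C_s$ for $1\le s\le\ell$ (this is immediate from the definition of $A$ as the matrix of initial-segment sums). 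The two rows $e^\top u\le|\lambda|$ and $-e^\top u\le-|\lambda|$ together are equivalent to $|\mathbf a|+|\mathbf b|=|u|=|\lambda|$. The last $d$ rows $-I_d u\le 0_d$ are equivalent to $u\ge 0$ componentwise, i.e. $a_i\ge0$ and $b_j\ge0$ for all $i,j$.

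Next I would combine these: a vector $u\in\R^d$ lies in $H$ if and only if $u\ge0$, $|u|=|\lambda|$, and the hook inequalities $A_{\le r}\le L_r$, $B_{\le s}\le C_s$ hold. Intersecting with $\Z^d$, the condition $u\ge0$ together with $u\in\Z^d$ is the same as $u\in\N^d$ (using the convention $0\in\N$ adopted in the paper), so $u\in H\cap\Z^d$ if and only if $(\mathbf a,\mathbf b)\in\N^k\times\N^\ell$ and the hook-plus-size conditions hold. By the definition of $B_\lambda$ recalled just above the statement, this is exactly the condition $(\mathbf a,\mathbf b)\in B_\lambda$. Both inclusions $H\cap\Z^d\subseteq B_\lambda$ and $B_\lambda\subseteq H\cap\Z^d$ then follow, since every step in the equivalence is reversible.

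There is essentially no obstacle here — the statement is a bookkeeping identity, and the only thing to be careful about is matching the matrix-vector inequalities row-by-row with the verbal conditions, in particular noting that the pair of opposite inequalities encodes an equality and that the nonnegativity block is what upgrades "integer point of $H$" to "element of $\N^d$". I would therefore present the argument as a short direct computation: expand $\widetilde A u\le\widetilde b$ into its four blocks, identify each block with one of the conditions defining $B_\lambda$, and conclude.
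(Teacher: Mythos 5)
Your proposal is correct and follows essentially the same route as the paper's own proof: unpack the four blocks of $\widetilde A u\le\widetilde b$ (hook inequalities, the opposite pair giving the size equality, and nonnegativity), then intersect with $\Z^d$ to land in $\N^{k+\ell}$ and match the definition of $B_\lambda$. The only slip is calling the block $Au\le(L_1,\dots,L_k,C_1,\dots,C_\ell)^{\top}$ the ``first $k$ rows'' when it comprises the first $k+\ell$ rows, which does not affect the argument.
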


\begin{proof}
To see that $H\cap\Bbb Z^d=B_\lambda$ is nothing more than unpacking the definition of $H$, write a generic point

$$
u=(a_1,\dots,a_k,b_1,\dots,b_\ell)\in\R^d.
$$

Then the block–rows of $\widetilde A u\le\widetilde b$ satisfy the following.
\begin{enumerate}
    \item Hook–inequalities:
   For each $1\le r\le k$,

   $$
     \sum_{i=1}^r a_i \;=\;(A_{r,*}\,u)\;\le\;L_r,
   $$

   and for each $1\le s\le\ell$,

   $$
     \sum_{j=1}^s b_j \;=\;(A_{k+s,*}\,u)\;\le\;C_s.
   $$

    \item Size equality: 
    $$
     e^\top u = \sum_{i=1}^k a_i + \sum_{j=1}^\ell b_j \;\le\;|\lambda|,
     \quad
     -\,e^\top u \;\le\;-\,|\lambda|
     \quad\Longrightarrow\quad
     \sum_{i=1}^k a_i + \sum_{j=1}^\ell b_j
     =|\lambda|.
   $$

    \item Nonnegativity: The rows of $-I_d$ force

   $$
     -\,u\;\le\;0
     \quad\Longrightarrow\quad
     a_i\ge0,\;b_j\ge0.
   $$

\end{enumerate}
    Hence
\[
   H\cap\Z^{d}=B_\lambda.
\]
\end{proof}

Thus, proving \eqref{eq:SNP} amounts to showing that every
\emph{integral} point of $H$ is a vertex of
$\Conv(B_\lambda)=H$; equivalently, that the constraint matrix
$\widetilde A$ is totally unimodular.  This will be established in the following theorem.

\begin{theorem}\label{thm:TU-tildeA}
The matrix $\widetilde A
          =\begin{bmatrix} A\\ e^{\!\top}\\ -e^{\!\top}\\ -I_{d}\end{bmatrix}
      \in\{0,\pm1\}^{(k+\ell+d+2)\times d}$
is totally unimodular.
\end{theorem}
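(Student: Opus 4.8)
The plan is to reduce the total unimodularity of $\widetilde A$ to that of the interval matrix $A$ by peeling off the extra rows one family at a time and invoking standard closure properties of the class of TU matrices. First I would recall (or cite from the references already in play) the elementary facts that the class of TU matrices is closed under: (i) deleting rows or columns; (ii) appending a row or column that is a unit vector $\pm e_i$; (iii) appending a row or column all of whose entries are $0$; and (iv) appending a copy, or the negative, of a row or column already present. Fact (ii) handles the block $-I_d$: each of its rows is $-e_i^{\top}$, so adjoining them to a TU matrix keeps it TU. Fact (iv) handles the rows $e^{\top}$ and $-e^{\top}$ once we know $e^{\top}$ can be adjoined, and it also lets us remove the redundant $-e^{\top}$ from consideration. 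So the crux is: show that $\begin{bmatrix} A \\ e^{\top}\end{bmatrix}$ is TU.

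The key step is therefore to verify that the augmented matrix $A' := \begin{bmatrix} A \\ e^{\top}\end{bmatrix}$ is still an interval matrix, or more precisely can be brought to interval form by a column permutation — after which Theorem~\ref{thm:C1_TU} finishes the job directly, and we never even need facts (iii)/(iv) for that block. Here is why this works: in $A$ the first $k$ columns carry the "nested initial segments" $\mathbf 1_{1\times 1}, \dots, \mathbf 1_{1\times k}$ (all left-justified), while the last $\ell$ columns carry $\mathbf 1_{1\times 1}, \dots, \mathbf 1_{1\times \ell}$ (also left-justified within that block). The all-ones row $e^{\top}$ is a single consecutive block of $1$'s spanning all $d$ columns, so it is automatically interval under \emph{any} column ordering. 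Thus the only thing to check is that $A$ itself, together with $e^{\top}$, has a \emph{simultaneous} consecutive-ones column ordering; but the identity ordering already works for $A$ (each row is a left-justified block within its $x$- or $y$-sub-block, hence a single run), and $e^{\top}$ imposes no further constraint. Hence $A'$ is an interval matrix and TU.

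Assembling: starting from $A'$ TU, append $-e^{\top}$ (a negated copy of an existing row, fact (iv)) to get $\begin{bmatrix} A \\ e^{\top} \\ -e^{\top}\end{bmatrix}$ TU; then append the $d$ rows $-e_1^{\top}, \dots, -e_d^{\top}$ one by one (fact (ii)) to obtain exactly $\widetilde A$, still TU. Alternatively — and perhaps cleaner for exposition — one can observe that the whole matrix $\widetilde A$ is, up to the row-sign normalization of Proposition~\ref{prop:rowsign} applied to the rows $-e^{\top}$ and $-I_d$, an interval matrix in the identity column order: each unit row $e_i^{\top}$ is a length-one block, $e^{\top}$ is the full block, and the rows of $A$ are left-justified blocks; so $\widetilde A$ is TU by Theorem~\ref{thm:C1_TU} and Proposition~\ref{prop:rowsign} in one shot. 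I would present this one-shot argument as the main line and mention the inductive "peeling" argument as a remark.

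The only genuine obstacle — really a bookkeeping point rather than a difficulty — is making sure the consecutive-ones property holds \emph{simultaneously} across all rows under a single column permutation, since being interval is a property of the whole matrix, not row by row. But because every row of $\widetilde A$ other than those of $A$ is either a single unit entry or the full all-ones row, and the rows of $A$ are already left-justified consecutive blocks in their native order, the identity permutation works for everything at once; there is nothing to optimize. I would also flag explicitly that Proposition~\ref{prop:rowsign} is what lets us absorb the minus signs on $-e^{\top}$ and $-I_d$ before quoting the consecutive-ones theorem, so the logical dependencies are transparent.
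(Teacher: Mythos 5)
Your main line of argument—absorbing the signs of the $-e^{\top}$ and $-I_d$ rows via Proposition~\ref{prop:rowsign} and then observing that the resulting matrix is an interval matrix in the identity column order, so Theorem~\ref{thm:C1_TU} applies—is exactly the paper's proof, and it is correct. The auxiliary ``peeling'' argument via standard TU closure properties is also valid but unnecessary given the one-shot version.
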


\begin{proof}
\emph{Row–sign normalisation.}
Let
$
  D:=\operatorname{diag}\bigl(I_{k+\ell},\,1,\,-1,\,-I_{d}\bigr)
$
and set $\widehat A:=D\widetilde A$.
Multiplying the rows coming from $-e^{\!\top}$ and $-I_d$ by $-1$ turns every non–zero entry of $\widehat A$ into $+1$. By Proposition~\ref{prop:rowsign}, $\widetilde A$ is TU iff $\widehat A$ is TU.

\emph{  $\widehat A$ is interval.}
With the column order inherited from $A$,
$\widehat A$ has the consecutive–ones property:the first $k+\ell$ rows are those of $A$; the next row (all–ones) is trivially interval; each of the final $d$ rows has exactly one $1$. Hence $\widehat A$ is an interval matrix, TU by
Theorem~\ref{thm:C1_TU}, and so $\widetilde A$ is TU.
\end{proof}

\begin{corollary}\label{thm:integral-H}
     $H$ is an integral polyhedron.
\end{corollary}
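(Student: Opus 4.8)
The plan is to chain together the two ingredients the paper has just assembled. First I would recall that by Theorem~\ref{thm:TU-tildeA} the constraint matrix $\widetilde A$ is totally unimodular, and that $\widetilde b\in\Z^{k+\ell+d+2}$ since all of its entries are the integers $L_r$, $C_s$, $\pm|\lambda|$, and $0$. Then I would invoke the Hoffman--Kruskal criterion (Theorem~\ref{thm:HK}): because $\widetilde A$ is TU and $\widetilde b$ is integral, the polyhedron $H=\{u\in\R^{d}\mid\widetilde A u\le\widetilde b\}$ is integral, i.e.\ $H=\Conv(H\cap\Z^{d})$. This is essentially a one-line deduction once the hypotheses are verified.

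The only point requiring a word of care is that Theorem~\ref{thm:HK} is stated for polyhedra of the form $\{x\mid Mx\le b\}$, which is exactly the form of $H$, so no rewriting is needed; in particular the equality constraint $|u|=|\lambda|$ has already been encoded as the two opposite inequalities $e^\top u\le|\lambda|$ and $-e^\top u\le-|\lambda|$ inside $\widetilde A u\le\widetilde b$, and $H$ is nonempty (it contains, e.g., the content of any $(k,\ell)$-semistandard tableau of shape $\lambda$, equivalently any point of $B_\lambda$, which is nonempty since $\lambda\in H(k,\ell)$). I would therefore simply note that the hypotheses of Theorem~\ref{thm:HK}(1) hold and conclude (2).

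I do not expect any genuine obstacle here: the corollary is a formal consequence of the preceding theorem and the cited criterion. If anything, the ``hard part'' was already done in proving total unimodularity of $\widetilde A$ via the row-sign normalisation and the consecutive-ones property (Theorem~\ref{thm:TU-tildeA}); the present statement just harvests that work. For completeness I would end by connecting back to the SNP goal: since $H\cap\Z^{d}=B_\lambda$ by the preceding proposition and $H$ is integral, we get $\Conv(B_\lambda)=\Conv(H\cap\Z^{d})=H$, hence $\Conv(B_\lambda)\cap\Z^{d}=H\cap\Z^{d}=B_\lambda$, which is precisely \eqref{eq:SNP} and, combined with Theorem~\ref{thm:support-hook}, establishes that $S_\lambda$ has a saturated Newton polytope.
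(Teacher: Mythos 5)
Your proposal is correct and follows exactly the paper's argument: Theorem~\ref{thm:TU-tildeA} gives total unimodularity of $\widetilde A$, $\widetilde b$ is integral, and the Hoffman--Kruskal criterion (Theorem~\ref{thm:HK}) then yields integrality of $H$. The extra remarks on nonemptiness and the link back to \eqref{eq:SNP} are harmless additions but not needed for the corollary itself.
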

\begin{proof}
    Since $\widetilde A$ is TU (Theorem~\ref{thm:TU-tildeA}) and $\widetilde b$ is integral, Theorem~\ref{thm:HK} implies that $H$ is an integral polyhedron.
\end{proof}
 
We obtain the main result of our paper.
\begin{theorem}\label{Main}
The supersymmetric Schur polynomial $S_\lambda(x,y)$ has a saturated Newton polytope.
\end{theorem}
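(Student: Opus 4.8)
The plan is to chain together the three facts that have already been established — the hook description of the support, its realization as the set of lattice points of the polyhedron $H$, and the integrality of $H$ — into the single equality that the SNP property demands.

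\textbf{Step 1: reduce SNP to an equality about $H$.} By definition $\Newton(S_\lambda)=\Conv(\Supp(S_\lambda))$, so the SNP property $\Newton(S_\lambda)\cap\Z^{d}=\Supp(S_\lambda)$ is equivalent to $\Conv(\Supp(S_\lambda))\cap\Z^{d}=\Supp(S_\lambda)$. Theorem~\ref{thm:support-hook} identifies $\Supp(S_\lambda)=B_\lambda$, and the polyhedral model shows $H\cap\Z^{d}=B_\lambda$; hence $\Supp(S_\lambda)=H\cap\Z^{d}$ and $\Newton(S_\lambda)=\Conv(H\cap\Z^{d})$.

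\textbf{Step 2: invoke integrality of $H$.} Since $\widetilde A$ is totally unimodular (Theorem~\ref{thm:TU-tildeA}) and $\widetilde b$ has integer entries (because $L_r,C_s,|\lambda|\in\Z$), the Hoffman–Kruskal criterion (Theorem~\ref{thm:HK}), recorded as Corollary~\ref{thm:integral-H}, yields $H=\Conv(H\cap\Z^{d})$. Combining with Step~1, $\Newton(S_\lambda)=H$, and therefore
\[
   \Newton(S_\lambda)\cap\Z^{d}=H\cap\Z^{d}=B_\lambda=\Supp(S_\lambda),
\]
which is precisely the SNP property.

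\textbf{Main obstacle.} With Theorems~\ref{thm:support-hook} and~\ref{thm:TU-tildeA} in hand this last deduction is pure bookkeeping; the genuine work has been front-loaded into two earlier places. First, the converse direction of Proposition~\ref{prop:hook-content}, where the mixed Berele–Regev Robinson–Schensted correspondence is used to realize \emph{every} content satisfying the hook inequalities by an actual $(k,\ell)$-semistandard tableau of shape $\lambda$ — this is what linearizes the tableau combinatorics into initial-segment inequalities. Second, the recognition that, after the row-sign normalization of Proposition~\ref{prop:rowsign}, the augmented constraint matrix $\widetilde A$ becomes an interval (consecutive-ones) matrix, so that total unimodularity follows from Heller–Tompkins / Fulkerson–Gross rather than from an ad hoc determinant computation. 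The only point to watch in the final write-up is that the non-negativity and size constraints are correctly encoded in $\widetilde A$ and $\widetilde b$, so that $H$ really is the polyhedron whose lattice points are exactly $B_\lambda$; once that bookkeeping is confirmed, the argument is immediate.
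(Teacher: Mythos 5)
Your proposal is correct and follows essentially the same route as the paper: identify $\Supp(S_\lambda)=B_\lambda=H\cap\Z^{d}$ via Theorem~\ref{thm:support-hook} and the polyhedral model, then use the total unimodularity of $\widetilde A$ with Hoffman--Kruskal (Corollary~\ref{thm:integral-H}) to conclude $\Newton(S_\lambda)=H$ and hence the SNP property. The paper's own proof is exactly this chain of equalities, so no further comment is needed.
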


\begin{proof}
For any supersymmetric Schur polynomial $S_\lambda$, we have 
\[
   \Newton(S_\lambda)
        =\Conv\bigl(\Supp S_\lambda\bigr)
        =\Conv(B_\lambda)
        =H
        \]
and
\[
   \Newton(S_\lambda)\cap\Z^{d}
        =H\cap\Z^{d}
        =B_\lambda
        =\Supp S_\lambda.
\]

\end{proof}

\begin{remark}
Because $H$ is integral, for every integer vector $c\in\Z^d$, the linear program
\[
\max\{\,c\cdot u \mid \widetilde A\,u \le \widetilde b\}
\]
achieves its maximum at an integral vertex $u^*\in H\cap\Z^d = B_\lambda$ (see \cite{Sch03}).  In other words, optimizing over $H$ by linear programming always returns the exponent of a genuine monomial in $S_\lambda$ that maximizes $c\cdot u$.  Furthermore, because the data $(\widetilde A,\widetilde b)$ are totally unimodular, these linear programs admit strongly polynomial‑time solutions via Tardos’ algorithm \cite{Tar86}.

\end{remark}

\begin{example}
    For $\lambda = (2,1,1)$, we have 
    \[S_{(2,1,1)}(x_1,x_2,y_1) = x_1^2x_2y_1 + x_1x_2^2y_1 + x_1^2y_1^2 + 2x_1x_2y_1^2 + x_2^2y_1^2 + x_1y_1^3 + x_2y_1^3.\]
    The support of $S_{(2,1,1)}(x_1,x_2,y_1)$ is 
    \[\Supp(S_{(2,1,1)}(x_1,x_2,y_1)) = \{(2,1,1),(1,2,1),(2,0,2),(1,1,2),(0,2,2),(1,0,3),(0,1,3)\}.\]
    The Newton polytope of $S_{(2,1,1)}(x_1,x_2,y_1)$, which is the convex hull of $\Supp(S_{(2,1,1)}(x_1,x_2,y_1))$, is a regular hexagon having six vertices 
    \[(2,1,1), (1,2,1), (2,0,2), (0,2,2), (1,0,3), (0,1,3)\]
    and its center $(1,1,2)$ (see Figure \ref{lat_pol}). Thus $S_{(2,1,1)}(x_1,x_2,y_1)$ has SNP.
    \begin{figure}[H]
\centering
\begin{tikzpicture}[scale=1.5]
	\def\tetcol{blue}
	\def\opacity{0.5}
	\tikzstyle{point1}=[ball color=blue, circle, draw=black, inner sep=0.03cm]
	\tikzstyle{point2}=[ball color=red, circle, draw=black, inner sep=0.03cm]
	\tikzstyle{point3}=[ball color=yellow, circle, draw=black, inner sep=0.03cm]
	
	\filldraw[fill=\tetcol!20,rounded corners=0.5pt] (1,0,3) -- (0,1,3) -- (0,2,2) -- (1,2,1) -- (2,1,1) -- (2,0,2) -- cycle;
	\filldraw[fill=\tetcol!20,rounded corners=0.5pt] (0,0,0) -- (1,0,0);
	\filldraw[fill=\tetcol!20,rounded corners=0.5pt] (0,0,0) -- (0,1,0);
	\filldraw[fill=\tetcol!20,rounded corners=0.5pt] (0,0,0) -- (0,0,1);

	%\node (A1) at (3,1,0)[point1]{};
	%\node (A2) at (3,0,1)[point1]{};
	\node (A3) at (1,0,3)[point1]{};
	\node (A4) at (0,1,3)[point1]{};
	%\node (A5) at (0,3,1)[point1]{};
	%\node (A6) at (1,3,0)[point1]{};
	
	%\node (B1) at (2,2,0)[point2]{};
	\node (B2) at (2,0,2)[point1]{};
	\node (B3) at (0,2,2)[point1]{};
	
	\node (C1) at (2,1,1)[point1]{};
	\node (C2) at (1,2,1)[point1]{};
	\node (C3) at (1,1,2)[point1]{};
	
	\node (O0) at (0,0,0)[]{};
	\node (O1) at (4,0,0)[]{};
	\node (O2) at (0,4,0)[]{};
	\node (O3) at (0,0,4)[]{};
	
	%\node at (A1) [above = 1mm]{\tiny$(3,1,0)$}; 
	%\node at (A2) [above = 1mm]{\tiny$(3,0,1)$};
	\node at (A3) [below = 0.5mm]{\tiny$(1,0,3)$};
	\node at (A4) [left = 0.5mm]{\tiny$(0,1,3)$};
	%\node at (A5) [above = 1mm]{\tiny$(0,3,1)$};
	%\node at (A6) [above = 1mm]{\tiny$(1,3,0)$};
	
	%\node at (B1) [above = 1mm]{\tiny$(2,2,0)$}; 
	\node at (B2) [below = 0.5mm]{\tiny$(2,0,2)$};
	\node at (B3) [above = 0.5mm]{\tiny$(0,2,2)$};
	
	\node at (C1) [right = 0.5mm]{\tiny$(2,1,1)$}; 
	\node at (C2) [above = 0.5mm]{\tiny$(1,2,1)$};
	\node at (C3) [right = 0.5mm]{\tiny$(1,1,2)$};
	
	\node at (O0) [left = 1mm]{\tiny$O$}; 
	\node at (O1) [above = 1mm]{\tiny$e_1$}; 
	\node at (O2) [above = 1mm]{\tiny$e_2$};
	\node at (O3) [above = 1mm]{\tiny$e_3$};
	 
	\draw[-{stealth[scale=1.0]}] (O0) -- (O1);
	\draw[-{stealth[scale=1.0]}] (O0) -- (O2);
	\draw[-{stealth[scale=1.0]}] (O0) -- (O3);
\end{tikzpicture}
\caption{The Newton polytope of $S_{(2,1,1)}(x_1,x_2,y_1)$.} \label{lat_pol}
\end{figure}
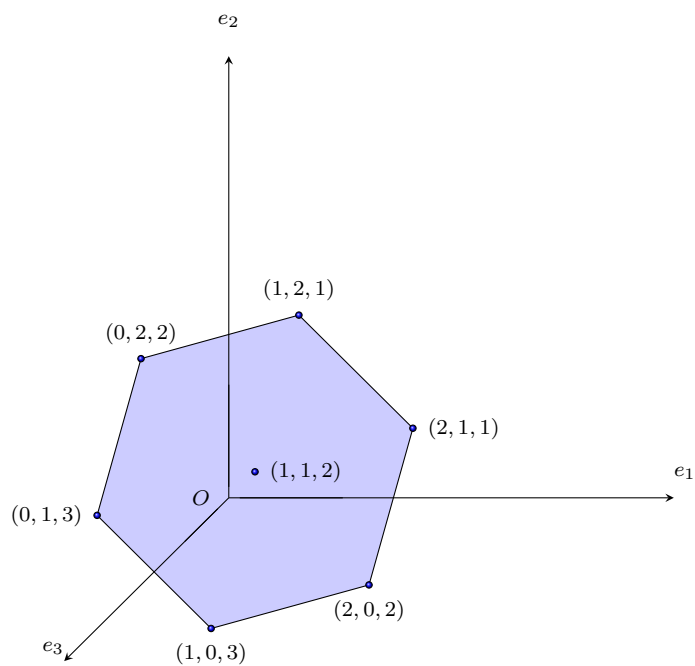
\end{example}

\bibliographystyle{plain}
\bibliography{References}

\end{document}